\documentclass[11pt]{article}
\usepackage{epsf,textcomp}
 \usepackage[active]{srcltx}
\usepackage[usenames]{color}
\usepackage{psfrag,bbm,
graphicx, harvard,mathrsfs,
epsf, euscript,amsfonts,amsmath,latexsym,amssymb,latexsym,amssymb,
latexsym,amsthm}
\usepackage{changes}
\usepackage[toc,page]{appendix}

\usepackage{xcolor}
\usepackage{hyperref}

\newtheorem{theorem}{Theorem}

\newtheorem{remark}{Remark}

\usepackage{fullpage}

\newcommand{\rd}{{\rm d}}
\newcommand{\cE}{{\cal E}}

\newcommand{\cS}{{\cal S}}

\newcommand{\cV}{{\cal V}}

\newcommand{\cZ}{{\cal Z}}

\newcommand{\bE}{\mathbb E}

\newcommand{\bN}{{\mathbb N}}

\newcommand{\bP}{{\mathbb P}}

\newcommand{\bR}{{\mathbb R}}

\newcommand{\sB}{{\mathscr B}}

\newcommand{\sF}{{\mathscr F}}

\newcommand{\sT}{{\mathscr T}}

\begin{document}
\numberwithin{equation}{section}

\title{On optimal stopping problems with full information and rank-dependent reward}
\author{
A. Goldenshluger\thanks{Department of Statistics, University of Haifa, Haifa 3498838, Israel.  e-mail:
{\em goldensh@stat.haifa.ac.il}. }
\and Y. Malinovsky\thanks{Department of Mathematics and Statistics,
University of Maryland,
Baltimore County, Baltimore, MD 21250, USA.
e-mail: {\em yaakovm@umbc.edu}}
}
\date{}
\maketitle
\begin{abstract}
In this paper we consider optimal stopping problems with full information and rank--dependent rewards. Our focus is on the following two problems: selecting one of the $r$~best observations (Gusein--Zade's problem with full information) and
minimizing the expected rank (Robbins' problem). In both settings we provide
full characterization of the optimal stopping rules and corresponding
optimal values. We also present numerical experiments in which the derived dynamic programming recursions are implemented, and   approximations
to optimal values are computed.
\end{abstract}
\vspace*{1em}
\noindent {\bf Keywords:} optimal stopping, full information, rank--dependent reward, dynamic programming,  Robbins' problem, Gusein--Zade's problem 
\par
\vspace*{1em}
\noindent {\bf 2000 AMS Subject Classification:} 60G40, 60J05, 90C39

\section{Introduction}
Let $X_1, \ldots, X_n$ be a sequence of independent random variables 
uniformly distributed on $[0,1]$ and defined on probability space 
$(\Omega, \sF, \bP)$. Let $A_t$ and $R_t$ be the absolute and 
relative  
ranks of observation $X_t$ respectively:
\[
 A_t:= \sum_{j=1}^n I(X_t\leq X_j),\;\;\;R_t:=\sum_{j=1}^t I(X_t\leq X_j),\;\;t=1, \ldots, n.
\]
Let $\sF_t=\sigma(X_1, \ldots, X_t)$ be the $\sigma$-field 
generated by $(X_1, \ldots, X_t)$, and $\{\sF_t, 1\leq t\leq n\}$ be the corresponding filtration. {\em A stopping time} $\tau$ with respect to 
$\{\sF_t, 1\leq t\leq n\}$ is a random variable on $(\Omega, \sF, \bP)$ such that $\bP\big\{\tau\in \{1, \ldots, n\}\big\}=1$, and $\{\tau=t\}\in \sF_t$, $1\leq t\leq n$. The set of all possible stopping times with respect to
filtration $\{\sF_t, 1\leq t\leq n\}$ is denoted $\sT$.
\par 
Let $q:\{1, \ldots, n\}\to \bR$ be a reward function. The average reward of 
a stopping time $\tau\in \sT$ is defined by 
\[
 V_n(q;\tau):= \bE q(A_\tau).
\]
The objective is to find the optimal stopping rule $\tau_*\in \sT$ such that 
\[
 V_n(q; \tau_*)= V_n^*(q):=\sup_{\tau\in \sT} \bE q(A_\tau)= \bE q(A_{\tau_*}).
\]
The stated problem is the one of optimal stopping with full information and rank--dependent reward~$q$.
\par 
Although the main idea of this paper is applicable
to
general reward functions $q$, our focus will be on the following two important specific settings.
\begin{itemize}
 \item[(a)] {\em Gusein--Zade's stopping problem with full information.} 
 In this setting 
 $q(a)= q_{\rm GZ}(a):=I(a\leq r)$, $1\leq r\leq n$, and the problem is to maximize 
 the probability of selecting one of the $r$ largest observations using 
 observations $X_1, \ldots, X_n$:
 \[
  V_n^*(q_{\rm GZ}):=\sup_{\tau\in \sT} \bP\{ A_\tau\leq r\}.
 \]
\item[(b)] {\em Robbins' optimal stopping problem.} In this celebrated problem the objective is to find the stopping time $\tau\in \sT$ 
that minimizes the expected absolute rank, i.e., to find $\tau_*\in \sT$
such that 
\[
 W_n^*(q_{\rm R}):=\inf_{\tau\in \sT} \bE A_{\tau}= \bE A_{\tau_*}.
\]
To treat the problems in the unified way, we put $q_{\rm R}(a)= n-a$ and state the problem as the problem of finding the stopping rule 
$\tau_*\in \sT$ such that  
\[
 V_n^*(q_{\rm R}):=\sup_{\tau\in \sT} V_n(q_{\rm R}; \tau)= n - \bE A_{\tau_*}.
\]
Then the optimal value of Robbins' problem is $W_n^*:=W_n^*(q_{\rm R})= n- V_n^*(q_{\rm R})$.
\end{itemize}
\par
The problem~(a) was introduced and solved in \citeasnoun{GuZa} in the 
{\em no--information} setting, when only relative ranks $\{R_t\}$ of $X_1, \ldots, X_n$ are observed sequentially. The setting when $X_1, \ldots, X_n$ are observed
and $r=1$
is the classical secretary problem with {\em full information}. This problem has been studied extensively.
It has been introduced and heuristically solved by \citeasnoun{GiMo}. For rigorous treatments we refer to
\citeasnoun{Bojdecki78} and \citeasnoun{Samuels82}.
It is worth mentioning that the solution in
\citeasnoun{Bojdecki78} is based on application of the
general optimal stopping theory to
the embedded Markov chain $(s_k, X_{s_k})_{k\geq 1}$, where $s_k$ are the moments
at which the records of sequence $(X_t)_{t\geq 1}$ are observed. \citeasnoun{Gnedin96} also discusses the best--choice problem with full information and develops the planar Poisson process (PPP) framework for its analysis.
To the best of our knowledge, characterization of the
optimal stopping rule in the full information Gusein--Zade's problem
with general $r\geq 2$ is not available in the literature.
\par
Robbins' problem  has attracted much more attention. The history of this celebrated  problem is nicely presented in
\citeasnoun{Bru05}; see also \citeasnoun{Bru2024}. Exact solutions to Robbins' problem
are available only for $n\leq 4$; see \citeasnoun{Dendievel}.
For general values of $n$ the problem remains open to this day, and
the main challenge is to find the optimal stopping rule and
the corresponding limiting optimal value.
\par 
There have been many efforts to understand the form of the optimal strategy and
to derive upper and lower bounds on the limiting value $W^*:=\lim_{n\to\infty} W_n^*$. The following results have been proved in
\citeasnoun{Bru93} and \citeasnoun{Ass96}.
It was shown that the sequence $(W_n^*)$ is monotone increasing in $n$. The
boundedness and, therefore, existence
of the limit $W^*=\lim_{n\to\infty} W_n^*$ follow from the results of
\citeasnoun{chow}, where  minimizing the expected
absolute rank with no--information is considered.
The optimal rule in $n$--period problem 
depends on the whole history, $X_1, \ldots, X_n$, and it was shown that 
$W_n^*\leq  7/3$ for all $n$.  This upper bound is achieved 
by a memoryless threshold rule
that stops the first time $t$ when 
the observed random variable $X_t$ exceeds a threshold.
\citeasnoun{Ass96} showed that a value 2.3318 
can be obtained with thresholds of a simple form. 
In addition, a sequence of lower bounds on $W^*$ was derived in \citeasnoun{Bru93} 
that converges to a limiting value which is not smaller than 1.908.
\citeasnoun{BruFer96} study structural 
properties of the optimal stopping rule. It is shown there that
the optimal stopping rule depends on the whole history, and that 
the optimal threshold function are stepwise monotone. 
\par
Another strand of research dealing with Robbins' problem considers Poisson embeddings; see \citeasnoun{Gnedin07}, \citeasnoun{BruSwan} and \citeasnoun{MeierSogner}.  
These papers demonstrate how upper bounds on the limiting optimal value in
the original problem can be derived from continuous time Poisson embeddings. 
\citeasnoun{Gnedin07} shows that dependence of the whole history persists in asymptotics as $n\to\infty$, so that memoryless threshold rules cannot be asymptotically optimal. The stopping strategy proposed in \citeasnoun{MeierSogner} 
combines a rank--dependent rule for the best--choice problem when $t\leq \alpha$
for some $\alpha$  with 
a threshold rule when $t>\alpha$.  It is claimed that such a strategy results at the limiting optimal value 2.33044.
Recent paper \citeasnoun{Brice2026} considers Robbins' problem as 
an infinite Markov Decision Process (MDP), where 
the states are the observations obtained so far, and the actions are the decisions to stop or not to stop. If the decision ``not to stop'' is made, then a new observation 
from $[0,1]$ is added to the available data. In such a model the state space in uncountable. The paper proposes a discretized model in which  the state space 
of the corresponding  MDP is finite. This interesting approach  enabled to derive approximations to the optimal values 
for $5\leq n\leq 100$. 
\par 
The main contributions of the present paper are the following. We provide complete characterizations  of the optimal stopping rules and optimal values in
the full information Gusein--Zade's stopping problem and in Robbins' problem.
We show that the optimal rules have the following general form
\begin{equation}\label{eq:stop-rule-form}
 \tau_*=\min\Big\{t\in\{1, \ldots,n\}: g_{t}(R_t, X_t) \geq C_{n-t}(Z_t)\Big\}.
\end{equation}
The functions $g_t(\cdot, \cdot)$ in \eqref{eq:stop-rule-form} 
are fully determined by the 
reward function $q$, while  
the threshold functions $C_t(\cdot)$ are defined  through  
explicit dynamic programming recursions; here  $Z_t$ is the vector given by 
$Z_t:=(X_{1:t}, \ldots, X_{1:r})$ for Gusein--Zade's problem and 
$Z_t:=(X_{1:t}, \ldots, X_{t:t})$ 
for Robbins' problem with
$X_{1:t}\geq X_{2:t}\geq \cdots\geq X_{t:t}$, $t=1, \ldots, t$ being the order statistics of the sample.
The proofs of main results of this paper, Theorems~\ref{th:GZ} and~\ref{th:Robbins}, are based on construction of a Markov representation
for the adapted reward and on application 
of the general theorem for stopping Markov sequences
\cite{ChowRobSieg}. We conducted a small numerical study in which 
the derived recursions were implemented, and  approximate optimal values were computed for a range of different values of $n$ (and $r$ for Gusein--Zade's problem). 
\par 
\citeasnoun{Bru05} discusses what
 should really  be considered as
a complete solution to the Robbins' problem and
poses the following three questions.
\begin{itemize}
\item[(i)] What is the limit $W^*=\lim_{n\to\infty} W_n^*$?
\item[(ii)] What is the form of the optimal strategy?
\item[(iii)] For any $n\in \bN$ and $\epsilon>0$, how can we determine
an $n(\epsilon)$--memory algorithm to compute an $\epsilon$--optimal strategy,
i.e. a strategy that achieves the value of at least $W_n^* -\epsilon$ by remembering, at each stage, no more than a fixed number $n(\epsilon)$ of observed
points?
\end{itemize}
Section~\ref{sec:Robbins} of this paper provides a complete answer to question~(ii).
\par 
The rest of this paper is organized as follows. Section~\ref{sec:reward}
presents a formula for the adapted reward in the full information stopping problem 
with general reward function $q$. 
Section~\ref{sec:GZ} deals with Gusein--Zade's problem with full information:
the main result is given in Theorem~\ref{th:GZ}, and 
Subsections~\ref{sec:Discussion} and \ref{sec:Numerics-GZ} present discussion and 
numerical experiments, respectively. In Section~\ref{sec:Robbins} we consider Robbins' problem. Theorem~\ref{th:Robbins} provides a characterization of the optimal rule and the optimal value, and Subsections~\ref{sec:Discussion-Robbins} and \ref{sec:Numerics-Robbins} present discussion and numerical results related to Robbins' problem.
Section~\ref{sec:conclusion} brings some concluding remarks. 

\section{Adapted reward}\label{sec:reward}
For $1\leq t\leq n$ we first compute 
 $\bE \{q(A_t)|\sF_t\}$:
 \begin{align}
  Y_t:=\bE \{q(A_t)| \sF_t\} &= \bE\Big\{ 
  q\Big(R_t+\sum_{i=t+1}^n I(X_i \geq X_t)\Big) \,\Big|\, \sF_t\Big\}
  \nonumber
  \\
  &= \sum_{j=0}^{n-t} q(R_t+j) \binom{n-t}{j}(1-X_t)^j X_t^{n-t-j},
 \label{eq:adapted-reward}
 \end{align}
where the last line follows from the fact that 
\[
 \sum_{i=t+1}^n I(X_i \geq X_t)\,\Big|\, X_1, \ldots, X_t\;\sim\;
 {\rm Binomial}(n-t, 1-X_t).
\]
Because for every $\tau\in \sT$
\[
 \bE q(A_\tau)=\sum_{t=1}^n \bE \big[ I(\tau=t) \bE\{q(A_t)|\sF_t\}\big]= \bE Y_\tau,
\]
the original problem is equivalent to the problem of 
stopping the sequence of random variables
$Y_1, \ldots, Y_n$:
\[
 V_n^*(q)= \bE q(A_{\tau_*})= \bE Y_{\tau_*}.
\]
\par 
For $1\leq t\leq n$
define functions $g_t: \{1, \ldots, t\}\times [0,1]\to \bR_+$
by 
\begin{equation}\label{eq:g-t}
 g_t(k, x):= \sum_{j=0}^{n-t} q(k+j) \binom{n-t}{j}(1-x)^j x^{n-t-j}.
\end{equation}
With this notation 
\begin{equation}
\label{eq:Y}
Y_t= g_t(R_t, X_t),\,\, 1\leq t\leq n.
\end{equation}
We call the sequence $(Y_t)$ {\em the adapted reward}. 

\section{Gusein--Zade's problem with full information}\label{sec:GZ}
In Gusein--Zade's stopping problem $q(a)=q_{\rm GZ}(a)= I(a\leq r)$, and  
\begin{equation} \label{eq:g-GZ}
 g_t(k, x)
  = I(k\leq r)\sum_{j=0}^{(r-k)\wedge (n-t)} \binom{n-t}{j}(1-x)^j x^{n-t-j},\;\;\;k=1, \ldots, n.
\end{equation}
According to 
\eqref{eq:Y}, the adapted reward is given by
\begin{align*}
 Y_t &=\bP\{A_t\leq r|\sF_t\} = g_t(R_t, X_t).
\end{align*} 
\par
At time instance $t$
let $Z_t$ be the vector of $r$ largest order statistics 
$X_{1:t}\geq X_{2:t} \geq \cdots \geq X_{r:t}$
of $X_1, \ldots, X_t$ with the convention that $X_{k:t}=0$ for $k>t$, i.e.,
\begin{eqnarray} 
Z_t &:=& (X_{1:t}, \ldots, X_{r:t}), \;\;\hspace{15mm}\hbox{ if   }\; t\geq r,
\label{eq:Z-t}
\\
Z_t &:=& (X_{1:t},\ldots, X_{t:t}, \underbrace{0,\ldots, 0}_{r-t}), 
             \;\;\;\;\hbox{if }\; 1\leq t<r.
             \nonumber
\end{eqnarray}
For definiteness, we also put $Z_0=0_r$, where $0_r$ is the $r$--tuple of zeros, $0_r:=(\overbrace{0,\ldots, 0}^r)$. 
Vectors $Z_t$, $t=1, \ldots, n$  take values in the set 
\[
 \cE_r:=\{ z=(z_1, \ldots, z_r): 1\geq z_1\geq  z_2\geq \cdots \geq z_r\geq 0\}.
\]
By convention, we put also $z_0=1$.
\par 
Define the map $T: \cE_r\times [0,1] \to \cE_r$ by the following formula:
\[
 T(z,u):= \left\{\begin{array}{ll}
                  T_j(z,u) , & \hbox{if   } z_j<u\leq z_{j-1},\;\;j=1, \ldots, r,\\[2mm]
                  z, & \hbox{if   }u \leq z_r,
                 \end{array}
                 \right.
\]
where 
\begin{equation}\label{eq:T-j}
 T_j(z,u) :=(z_1,\ldots, z_{j-1}, u, z_{j}, \ldots, z_{r-1}),\;\;\;j=1,\ldots, r.
\end{equation}
In words, $T(z,u)$ is the vector comprised of $r$ ordered
largest values of $(z,u)\in \cE_r\times [0,1]$. Vector 
$T_j(z,u)$ is obtained when $u$ has relative rank $j=1, \ldots,r$ 
with respect to the components of vector~$z$.
\subsection{Optimal stopping rule and optimal value}
The following statement provides a characterization of the optimal stopping rule and the corresponding optimal value.
\begin{theorem}\label{th:GZ}
Let $C_m(z)$, $m=0, \ldots, n$ be functions on $\cE_r$ defined recursively
by
\begin{align}
C_0(z) & = 0, 
\label{eq:C-0}
\\
 C_m(z) &= z_r C_{m-1}(z)+ \sum_{j=1}^r\int_{z_j}^{z_{j-1}}
 \max\big\{g_{n-m+1}(j, u), C_{m-1}(T_j(z,u))\big\}\rd u,\,\,m\geq 1,
 \label{eq:C-recursion}
\end{align}
where $g_m(j,u)$ and $T_j(\cdot, \cdot)$ are given in 
\eqref{eq:g-GZ} and \eqref{eq:T-j} respectively. Then  
 the optimal stopping rule~is 
\begin{equation}\label{eq:GZ-stop}
 \tau_*:=\inf \Big\{1\leq t\leq n: g_{t}(R_t, X_t)\geq C_{n-t}(Z_t)\Big\},
\end{equation}
and the corresponding optimal value is 
\[
 V_n^*(q_{\rm GZ})= C_n(0).
\]
\end{theorem}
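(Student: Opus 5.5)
The plan is to reduce the problem, via Section~\ref{sec:reward}, to optimal stopping of the adapted reward $Y_t=g_t(R_t,X_t)$, and then to exhibit a Markov structure in which $Y_t$ is a function of the current state. The natural state at time $t$ is $(t,Z_t)$ together with the rank information of the newest observation. The key observation is that $Y_t$ vanishes unless $R_t\le r$. When $R_t=j\le r$, the value $X_t$ is one of the coordinates of $Z_t$, namely $X_t=(Z_t)_j$. Hence $Y_t$ is a measurable function of $(t,Z_{t-1},X_t)$, and so is $Z_t=T(Z_{t-1},X_t)$. Moreover, since the $X_t$ are i.i.d.\ uniform and independent of $\sF_{t-1}$, the sequence $(Z_t)_{t\ge 0}$ is a time-homogeneous Markov chain on $\cE_r$ with transition $z\mapsto T(z,U)$, $U\sim{\rm Unif}[0,1]$. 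I would therefore take the Markov sequence $W_t:=(t,Z_{t-1},X_t)$, or equivalently $(t,Z_t,R_t\wedge(r+1))$, and write $Y_t=\phi_t(W_t)$ with $\phi_t(z,u)=g_t(j,u)$ if $z_j<u\le z_{j-1}$ and $0$ if $u\le z_r$.

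Next I would invoke the general theorem on optimal stopping of Markov sequences with finite horizon from \cite{ChowRobSieg}, i.e.\ backward induction. The value of the problem with $m$ steps remaining, conditioned on the past, is $\bE\{\cdot\mid\sF_t\}$ of a function of $Z_t$ alone. Indeed, the future rewards $Y_{t+1},\dots,Y_n$ depend on the past only through $Z_t$: a future observation $X_s$ with relative rank $R_s\le r$ has its relative rank determined by its position among the coordinates of $Z_{s-1}$. Define $C_m(z)$ as the optimal expected reward obtainable from the remaining $m$ observations when $Z_{n-m}=z$, not counting stopping at the current time (the continuation value). Then $C_0=0$, and conditioning on the next observation $U=X_{n-m+1}$ gives
\[
C_m(z)=\bE\max\bigl\{\phi_{n-m+1}(z,U),\,C_{m-1}(T(z,U))\bigr\}.
\]
Splitting the expectation over the events $\{U\le z_r\}$ and $\{z_j<U\le z_{j-1}\}$ for $j=1,\dots,r$ yields exactly \eqref{eq:C-recursion}. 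On $\{U\le z_r\}$ the reward is $0$ while $C_{m-1}\ge0$ and $T(z,U)=z$, which produces the term $z_rC_{m-1}(z)$. On the other pieces $T=T_j$ and the reward is $g_{n-m+1}(j,U)$. The Snell envelope is then $\max\{Y_t,C_{n-t}(Z_t)\}$, so the optimal rule stops at the first $t$ with $Y_t\ge C_{n-t}(Z_t)$, which is \eqref{eq:GZ-stop}. When $R_t>r$ we have $Y_t=0\le C$, and stopping there is either excluded or harmless: with ties at $0$ one may stop, and the value is unaffected. Finally, $V_n^*=C_n(Z_0)=C_n(0)$, where $z_0=1$ makes the $j=1$ integral run over $(z_1,1]$.

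The main obstacle is to verify rigorously that the conditional continuation value given $\sF_t$ depends only on $Z_t$. This is what justifies the Markov reduction despite $g_t$ depending on $R_t$, which is not a function of $Z_t$ when $R_t>r$. I would prove it by backward induction on $m$. Assuming $\operatorname*{ess\,sup}_{\tau>t}\bE\{Y_\tau\mid\sF_t\}=C_{n-t}(Z_t)$, I would show the same statement at $t-1$. This uses the independence of $X_t$ from $\sF_{t-1}$, the identity $Y_t=\phi_t(Z_{t-1},X_t)$, and $Z_t=T(Z_{t-1},X_t)$, and it is exactly the dynamic programming principle of \cite{ChowRobSieg} for finite-horizon Markov problems. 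Measurability of $C_m$ (needed for the integrals) follows inductively, since $g$ is continuous and $T_j$ is continuous.
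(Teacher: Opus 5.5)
Your proposal is correct and follows essentially the same route as the paper. Both represent the adapted reward $Y_t=g_t(R_t,X_t)$ through the top-$r$ order statistics $Z_t$, using $Y_t=0$ when $R_t>r$ and $X_t=(Z_t)_{R_t}$ otherwise, and both then apply finite-horizon backward induction with the continuation integral split over $\{u\le z_r\}$ and the insertion intervals $(z_j,z_{j-1}]$. The differences are cosmetic: you take the state $(Z_{t-1},X_t)$ and identify $C_{n-t}(Z_t)$ directly as the essential supremum of continuation rewards, whereas the paper uses $S_t=(Z_t,X_t)$ with value functions $V_m(z,x)$ (also, $(t,Z_t,R_t\wedge(r+1))$ is an alternative state rather than an \emph{equivalent} one, though it works just as well).
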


\begin{remark}
 Because $g_n(k, x)=I(k\leq r)$ and $C_0(z)=0$, 
 the inequality in \eqref{eq:GZ-stop} is always fulfilled for $t=n$. 
Thus, the stopping set in the definition of $\tau_*$ is
non--empty.
 \end{remark}

 \begin{remark}
  In the extended form the recursion is written as follows
  \begin{align*}
    C_m(z_1, \ldots, z_r) &= z_r C_{m-1}(z_1, \ldots, z_r)
   \\
   &\;\;\;+\;\sum_{j=1}^r \int_{z_j}^{z_{j-1}} \max\big\{ g_{n-m+1}(j, u), 
   C_{m-1}(z_1, \ldots, z_{j-1}, u, z_j, \ldots, z_{r-1})\big\}\rd u,
  \end{align*}
where $g_m(j, u)$ is given in \eqref{eq:g-GZ}.
 \end{remark}
\begin{proof}
{\bf 1$^0$.} First we demonstrate that the adapted reward admits 
a Markovian representation. 
Let $\cS_r:=\cE_r\times [0,1]$, and define random process
\begin{equation}
\label{eq:S-t}
 S_t := (Z_t, X_t)= (T(Z_{t-1}, X_t), X_t),\;\;\;t=1,2, \ldots
\end{equation}
on state space $(\cS_r, \sB)$,
where  $\sB$ is the Borel $\sigma$--filed on $\cS_r$. Observe that
 $(S_t)_{t\geq 0}$ is a time--homogeneous Markov chain on 
 $(\cS_r, \sB)$: for every   $B\in \sB$ and $t\in \bN_0$ one has 
\begin{equation}
 \bP(S_{t+1}\in B\,|\,S_1, \ldots, S_t) = K(S_t, B),
\end{equation}
where 
\begin{equation}\label{eq:K}
 K(s, B)= K((z,x), B):=\int_0^1 I\{(T(z,u), u)\in B\} \rd u.
\end{equation}
Indeed,  for any bounded measurable function $f$
 \[
  f(S_{t+1})= f(T(Z_t, X_{t+1}), X_{t+1}).
 \]
 Since $X_{t+1}$ is independent of $\sF_{t}$, 
\begin{align}\label{eq:1}
 \bE \{f(S_{t+1})|\sF_t\}= \bE\{f(T(Z_t, X_{t+1}), X_{t+1})|\sF_t\}
 =\int_0^1 f(T(Z_t, u), u)\rd u.
\end{align}
The right hand side depends on $(X_1, \ldots, X_t)$  
only through $Z_t$, and hence only through $S_t$. 
Since $S_t$ is $\sF_t$--measurable, in view of  \eqref{eq:1}, 
\[ 
\bE\{ f(S_{t+1})| S_1, \ldots, S_{t}\}=
\bE\Big[\bE\{ f(S_{t+1})| \sF_t\}\,| S_1, \ldots, S_{t}\Big]
= \bE\{ f(S_{t+1})|S_t\}. 
\]
Thus, $\{S_t\}$ is  a Markov chain, as claimed.
\par 
The adapted reward $Y_t$ 
can be represented as a function of the state of the process
$(S_t)_{t\geq 0}$. For $(z,x)=(z_1, \ldots, z_r, x)\in \cS_r$ and 
$t=1, \ldots, n$ we define
\[
 G_t(z,x):=\left\{\begin{array}{ll}
g_{t}(k, x), & \hbox{if  } x=z_k,\;\;k=1, \ldots, r,\\[2mm]
0, & \hbox{if  } x<z_r,
                  \end{array}
\right.
\]
where $g_t(k, x)$ is given in \eqref{eq:g-GZ}. With this notation we have that 
\[
 Y_t= G_{t}(S_t),\;\;\ S_t := (Z_t, X_t)= (T(Z_{t-1}, X_t), X_t),\;\;\;t=1,2, \ldots.
\]
In words, $Y_t=G_{t}(S_t)=g_t(R_t, X_t)$ is the reward of stopping
after observing $X_t\sim U(0,1)$. The reward depends on 
the relative rank of $X_t$ among $r$ largest order statistics observed
until time~$t$; if the relative rank of $X_t$ is greater than $r$, 
the reward is zero.  
With this Markovian representation we can apply the general theory of optimal stopping to the Markov sequence $(Y_t)_{t\geq 1}$; see \citeasnoun{ChowRobSieg}.

\par\medskip 
{\bf 2$^0$.}
Let $V_m(z, x)$ be the optimal success probability from  state
$(z,x)\in \cS_r$ when we examine observation $X_t$ for stopping and 
there are $m=n-t$ steps to go.
In view of the Markov property of $\{S_t\}$, 
the dynamic programming yields 
\begin{align*}
 V_m(z,x)&= 
 \max\big\{G_{n-m}(z,x),\; \bE \big[V_{m-1}(S_{t+1})|S_t=(z,x)\big]\big\},\;\;\;1\leq m\leq n-1,
 \\
 V_0(z,x)&= G_n(z,x).
\end{align*}
Because the transition kernel in \eqref{eq:K} does not depend on $x$, the conditional expectation on the right hand side of the above formula~is a function of $z$ 
only. Let us introduce the following notation    
\[
C_m(z):= \bE \big[V_{m-1}(S_{t+1})|S_t=(z,x)\big] = \bE \big[V_{m-1}(S_{t+1})|Z_t=z\big].
\]
In words, 
$C_m(z)$ is the expected conditional reward from continuation of sampling,
given  that $S_t=(z,x)$ and there are $m$ steps to go.
The dynamic programming recursion takes the form
\begin{equation}\label{eq:Vm=}
 V_m(z,x)= 
 \max\big\{G_{n-m}(z,x),\; C_m(z)\big\},\;\;\;\forall m\geq 1.
\end{equation}
Furthermore, by definition of $T(z,u)$ in \eqref{eq:T-j} and by \eqref{eq:S-t}, 
\begin{align*}
 C_m(z) & =
 \bE \big[V_{m-1}(S_{t+1})|Z_t=z\big]=
 \int_0^1 V_{m-1}(T(z, u), u) \rd u
 \\
 &= \int_0^{z_r} V_{m-1}(z,u) \rd u + \sum_{j=1}^r\int_{z_j}^{z_{j-1}}
 V_{m-1}(T_j(z,u), u) \rd u.
\end{align*}
If  $0< u < z_r$
then the relative rank of $u$ is not among the top $r$ ranks; 
hence $G_{n-m+1}(z,u)=0$, and 
in view
of~\eqref{eq:Vm=},
\[
 V_{m-1}(z,u)= \max\{0, C_{m-1}(z)\}= C_{m-1}(z),\;\;0<u<z_r.
\]
This implies that 
\[
 \int_0^{z_r} V_{m-1}(z,u) \rd u= \int_0^{z_r} C_{m-1}(z) \rd u= z_r C_{m-1}(z).
\]
Furthermore, 
for $z_j< u< z_{j-1}$ we have that 
$G_{n-m+1}(T_j(z,u), u)= {g_{n-m+1}(j, u)}$;
hence 
\[ 
V_{m-1}(T_j(z,u))=\max\{g_{n-m+1}(j,u), C_{m-1}(T_j(z,u))\},\;\;\;
z_j< u< z_{j-1}.
\]
Combining these relationships we obtain \eqref{eq:C-recursion}.
At the last observation when $m=0$ we have  $C_0(z)=C_0(z_1,\ldots, z_r)=0$.
This completes the proof.
\end{proof}

\subsection{Discussion}\label{sec:Discussion}
In this subsection we 
specialize  recursive formula   \eqref{eq:C-recursion}
for particular cases of $r=1$ and $r=2$.
We also discuss implementation of 
the recursion for computing threshold functions $C_m(z)$ and
optimal values $C_n(0)$
numerically for a range of $r$ and $n$.

\subsubsection{Classical secretary problem with full information} 
For $r=1$, appealing to \eqref{eq:g-GZ} and \eqref{eq:Z-t}, we have 
\[
Z_t=\max_{1\leq j\leq t}X_j,\;\;\;\;\;\;
g_{t}(k,x)= I(k=1) x^{n-t}=\left\{\begin{array}{ll} x^{n-t}, & k=1,\\
  0, & k>0.                 
                  \end{array}
 \right.
\]
The recursion \eqref{eq:C-recursion} becomes
\begin{align}
 C_m(z) &= zC_{m-1}(z)+
 \int_z^1\max\{u^{m-1},C_{m-1}(u\vee z)\}\rd u,
 \nonumber
 \\
 &= zC_{m-1}(z)+
 \int_z^1\max\{u^{m-1},C_{m-1}(u)\}\rd u,
 \qquad C_0(z)=0.
 \label{eq:r1-recursion}
\end{align}
In particular, 
\[
 C_1(z)=1-z,\;\;\;\;C_2(z)=\left\{\begin{array}{ll}
-\tfrac{1}{2}z^2+\tfrac{3}{4}, & z\leq \tfrac{1}{2},\\[2mm]
-\tfrac{3}{2}z^2 +z +\tfrac{1}{2}, & z>\tfrac{1}{2}.
                              \end{array}
\right.
\]
Figure~\ref{fig:Cr=1} displays functions $C_m(z)$ for a range of
different values of $m$.
\par 
The optimal stopping rule is
\[
 \tau_*= \min \Big\{1\leq t\leq n: R_t=1 \hbox{  and  } X_t^{n-t}\geq C_{n-t}(Z_t)\Big\}.
\]
The recursive formula \eqref{eq:r1-recursion} coincides with the one given in \citeasnoun{Samuels82}. The optimal values of $C_n(0)$ for 
different
values of $n$ are tabulated in Table~8 of \citeasnoun{GiMo}.

\begin{figure}
\begin{center}
 \includegraphics[scale=0.35]{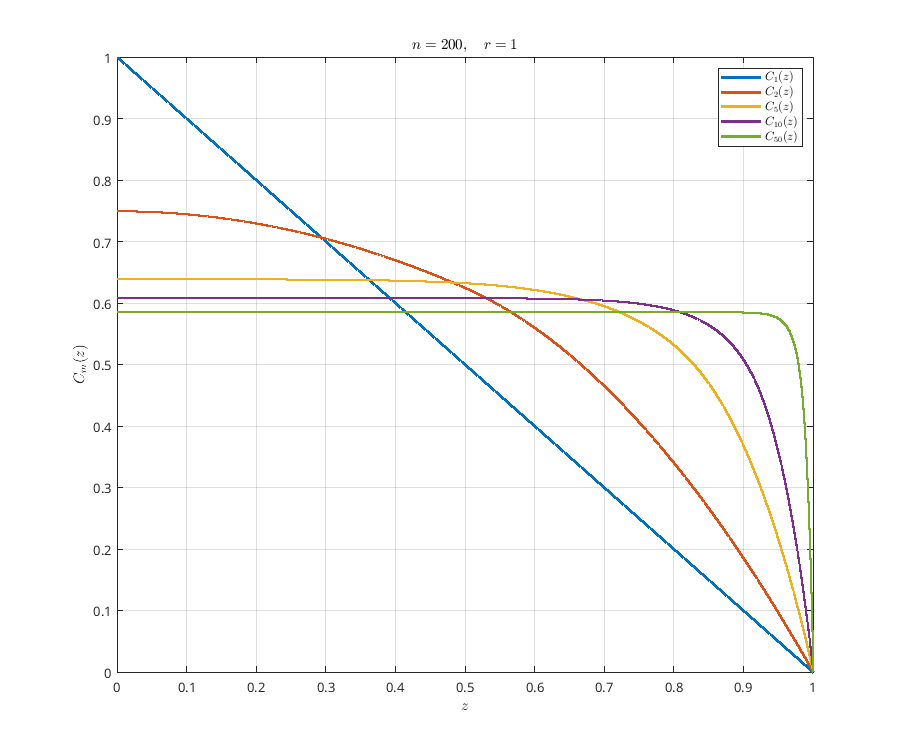}
\end{center}
\caption{Functions $C_m(z)$ given by \eqref{eq:r1-recursion} for 
 $m=1, 2,5, 10, 50$.\label{fig:Cr=1}}
 \end{figure}
\subsubsection{Selecting one of the two largest observations with full information}
For $r=2$ we have
$Z_t=(X_{1:t},X_{2:t})$,
where $X_{1:t}\geq X_{2:t}$ and, by convention,
$X_{2:1}=0$. Furthermore, for $1\leq t\leq n-1$ using \eqref{eq:g-GZ} we have
\begin{align*}
 g_{t}(1, x)
 &=x^{n-t}+(n-t)(1-x)x^{n-t-1}, 
 \\
 g_{t}(2, x)
 &=x^{n-t}, 
\end{align*}
and $g_n(k, x)=1$,
$g_{0}(k,x)=1$ for $k=1,2$.
Writing $z=(z_1,z_2)$, the recursion
\eqref{eq:C-recursion} becomes
\begin{align}
 C_m(z_1,z_2)
 =z_2C_{m-1}(z_1,z_2) +\int_{z_1}^1
 \max\big\{g_{n-m+1}(1, u),
 C_{m-1}(u,z_1)\big\}\rd u \nonumber\\
 +\int_{z_2}^{z_1}
 \max\big\{g_{n-m+1}(2, u),
 C_{m-1}(z_1,u)\big\}\rd u,
 \label{eq:r2-recursion}
\end{align}
and $C_0(z_1, z_2)=0$.
The optimal stopping rule \eqref{eq:GZ-stop} takes the form 
\begin{align*}
&\tau_*=\min\Big\{1\leq t\leq n: I(E_t)=1\Big\},
\\
&E_t:=
\Big\{R_t=1,\; X_t^{n-t}+(n-t)(1-X_t)X_t^{n-t-1} \geq C_{n-t}(Z_t)\Big\}\cup 
\Big\{R_t=2,\; X_t^{n-t}\geq C_{n-t}(Z_t)\Big\}.
\end{align*}

\subsection{Numerical experiments}\label{sec:Numerics-GZ}

\paragraph{Algorithm.}
In our implementation the one--dimensional integrals in \eqref{eq:C-recursion}
are approximated  numerically
using the trapezoid rule on the points
of the non--uniform grid 
\[
x_i=1-\Big(1-\frac{i}{K}\Big)^3,\;\;i=0, 1, \ldots, K,
\]
where $K$ is the grid size. The points of the grid are chosen to be denser near one because in this region the binomial tail reward varies rapidly 
for large $m$.
\par
Define the set of grid--index tuples by
\[
 \cV_r:=\left\{ v=(v_1, \ldots, v_r)\in \bN^r: K+1\geq v_1\geq \ldots\geq v_r\geq  1\right\}.
\]
With each $r$--tuple $v\in \cV_r$ of integers
we associate the
{\em grid state}
$
x(v):=(x_{v_1}, \ldots, x_{v_r})\in \cE_r$. The zero state has the index tuple
$(1, \ldots, 1)$.
The total number of grid states is
\[
|\cV_r|=\binom{K+r}{r},
\]
which is the number of multisets of size $r$ in
$K+1$ grid indices.
The algorithm approximates values of functions $C_m$ at the grid points,
$C_m(x(v))$, $v\in \cV_r$, and we denote the corresponding approximations by
$\hat{C}_m (x(v))$.
\par
The approximations $\hat{C}_m(x(v))$
are constructed as follows.
Fix $v\in \cV_r$, $m\geq 1$ and $j\in \{1, \ldots, r\}$. On
every interval $[x_{v_j}, x_{v_{j-1}}]$
we approximate the corresponding integrals in \eqref{eq:C-recursion}
in the following way:
\begin{multline}
 \int_{x_{v_j}}^{x_{v_{j-1}}} \max\big\{g_{n-m+1}(j,u), C_{m-1}(x_{v_1},\ldots, x_{v{j-1}}, u, x_{v_j}, \ldots x_{v_{r-1}})\big\} \rd u
 \\
\;\; \approx \frac{1}{2} \sum_{k=v_j}^{v_{j-1}} (x_{k+1}-x_k)
 \Big[  \max\big\{g_{n-m+1}(j,x_k), \hat{C}_{m-1}(x_{v_1},\ldots, x_{v_{j-1}}, x_k, x_{v_j}, \ldots x_{v_{r-1}})\big\}
 \\
+ \max\big\{g_{n-m+1}(j,x_{k+1}), \hat{C}_{m-1}(x_{v_1},\ldots, x_{v_{j-1}}, x_{k+1}, x_{v_j}, \ldots x_{v_{r-1}})\big\}\Big].
\label{eq:C-approximation}
 \end{multline}
 By convention, the empty sum is defined to be zero. Then the complete discrete
 recursion for $m=1, \ldots, n$ is
 \begin{multline*}
  \hat{C}_m(x_{v_1},\ldots, x_{v_r}) = x_{v_r}\hat{C}_{m-1}(x_{v_1},\ldots, x_{v_r})
\\
+ \frac{1}{2} \sum_{j=1}^r \sum_{k=v_j}^{v_{j-1}} (x_{k+1}-x_k)
 \Big[  \max\big\{g_{n-m+1}(j,x_k), \hat{C}_{m-1}(x_{v_1},\ldots, x_{v_{j-1}}, x_k, x_{v_j}, \ldots x_{v_{r-1}})\big\}
 \\
 + \max\big\{g_{n-m+1}(j,x_{k+1}), \hat{C}_{m-1}(x_{v_1},\ldots, x_{v_{j-1}}, x_{k+1}, x_{v_j}, \ldots x_{v_{r-1}})\big\}\Big]
  \end{multline*}
with the initialization $\hat{C}_0(x_{v_1}, \ldots, x_{v_r})=0$ for all $v\in \cV_r$.
The output is $\hat{C}_n(x_0, \ldots, x_0)=\hat{C}_n(0,\ldots, 0)$.
\par
The stopping rewards $g_{n-m+1}(j, x)$ for $r-j\geq m-1$ are equal~1; otherwise, they are evaluated using the regularized incomplete beta function [see formula 6.6.4
in \citeasnoun{Abramowitz}]:
\[
 g_{n-m+1}(j, x)= I_x(m-r+j-1, r-1),\;\;\;\;
 I_x(a,b)=\frac{\int_0^x t^{a-1}(1-t)^{b-1}\rd t}{\int_0^1 t^{a-1}(1-t)^{b-1}\rd t}.
\]
\par
To compute $\hat{C}_m(x_{v_1}, \ldots, x_{v_{r}})$ for every state $x(v)=(x_{v_1}, \ldots, x_{v_r})$ the algorithm considers {\em prefix}
$(x_{v_1},\ldots, x_{v_{r-1}})$. Fixing the prefix,  we have
a groups of states that are determined by sorting the tuples
$(x_{v_1},\ldots, x_{v_{r-1}}, x_i)$, $i=1, \ldots, v_{r-1}$.
Thus, for every $m$ the algorithm processes $\binom{K+r-1}{r-1}$ prefixes,
each with $K+1$ resulting states ($K$ intervals). Thus, for each $m$,
the algorithm sorts $\binom{K+r-1}{r-1}(K+1)$ tuples of length $r$ and computes
the corresponding values of function $\hat{C}_m$ at these points. Therefore
the number of operations for horizon of length $n$ is
\[
 O\bigg(n \binom{K+r-1}{r-1} (K+1) r\log(1+r)\bigg)
\]
for this part of the algorithm. In addition, the reward evaluation requires $O((K+r)r)$ operations. This estimate shows that the complexity grows rapidly with the number of grid intervals $K$. The required memory also increases rapidly with $K$,
so the algorithm cannot process big values of $K$.
In fact, $K$ and $r$ are the most important parameters that
determine running time.

\paragraph{Numerical results.}
The approximate optimal values $\hat{C}_n(0,\ldots,0)$
for  $r=1,2,3,4,5$ and $n=10,100,250, 500, 750, 1000, 2500$
are given in
Table~\ref{tab:GZ}. The values are obtained
with  number of grid intervals $K=4000, 998, 142, 56$ and $32$
for $r=1,2,3,4$ and $5$ respectively. For every $r> 1$ the  chosen value of $K$ is such that the number of states $|\cV_r|$ does not exceed 500,000.
This guarantees a reasonable running time of the code. Note
that the entries in the table  are approximations; in particular, the coarse grid for $r=5$
produces a non--monotone value at $n=750$.
\par
The numbers in the table for $r=1$
coincide with those presented  in Table~8 of \citeasnoun{GiMo}.
For $r\geq 2$ we are not aware of works where
the optimal values for Gusein--Zade's problem with full information
are reported. However, the results of Table~\ref{tab:GZ} can be compared with those for the no-information setting where only relative ranks of random variables $X_1, \ldots, X_n$ are observed.
In this setting with
$r=2$ \citeasnoun{GuZa} proves that $\lim_{n\to\infty} V^*_n(q_{\rm GZ}) \approx 0.574$. In addition, Table~1 in \citeasnoun{GMZ} presents optimal values for different
$r$ and $n$. In particular, for $n=1000$ and $r=2, 5$
the optimal values in the problem with no--information are
$0.574174$ and $0.86123$. These numbers should be compared with $0.8009$ and
$0.9758$ respectively in the full information case; see the row of Table~\ref{th:GZ}
corresponding to $n=1000$.
This demonstrates the virtue of observing the values of $X_1, \ldots, X_n$ in comparison with the case where only relative ranks of the observations can be observed.
It is also worth noting  that for $r>5$ the winning probability becomes very close
to one; for this reason the numerical results in Table~\ref{tab:GZ}
are reported for $r\leq 5$ only. 

\begin{table}
\label{tab:Q3-values}
\setlength{\tabcolsep}{3pt}
\begin{tabular}{||r|ccccc||}
\hline
$n$ & $r=1$ & $r=2$ & $r=3$ & $r=4$ & $r=5$ 
\\
\hline
10 & 0.6087  & 0.8398  & 0.9384 & 0.9788  & 0.9938  
\\
100 & 0.5829 & 0.8044  & 0.9058 & 0.9542  & 0.9777 
\\
250 & 0.5813 & 0.8021  & 0.9036 & 0.9525  & 0.9767 
\\
500 & 0.5807 & 0.8013  & 0.9029 & 0.9521  & 0.9760  
\\
750 & 0.5805 & 0.8010  & 0.9027 & 0.9521  & 0.9765  
\\
1000 & 0.5804 & 0.8009 & 0.9026 & 0.9517  & 0.9758  
\\
2500 & 0.5803 & 0.8007 & 0.9025 & 0.9513  & 0.9750 
\\
\hline
\end{tabular}
\centering
\caption{Approximate optimal values $\hat{C}_n(0)$ for Gusein-Zade's stopping problem for $n=10, 100, 250, 500, 750, 1000$ and $2500$ computed using grids with 4000, 998, 142, 56, and 32 intervals for $r=1,2,3,4,5$, respectively. Entries are numerical grid approximations, rounded to 4 decimal digits. \label{tab:GZ}}
\end{table}
\par

\section{Robbins' problem}\label{sec:Robbins}
In this setting $q(a)=n-a$, and, by \eqref{eq:g-t}, 
\begin{equation}\label{eq:g-Rob}
 g_t(k,x)= n-k- (n-t)(1-x)= t-k + (n-t)x,\;\;k=1, \ldots, t.
\end{equation}
According to 
\eqref{eq:adapted-reward}, \eqref{eq:Y}, and \eqref{eq:g-Rob}
\begin{align*}
 Y_t &=\bE\{A_t|\sF_t\} = g_t(R_t, X_t)= t-R_t+ (n-t)X_t.
\end{align*} 
\par 
Let $0_k$ denote the   
$k$--tuple of zeros, $0_k:=(\underbrace{0, \ldots, 0}_k)$.
Define 
\[
 Z_t:=(X_{1:t}, \ldots, X_{t:t}, 0_{n-t}
 ),\;\;t=1, \ldots, n,\;\;\;Z_0:=0_n, 
\]
and let 
\begin{align}\label{eq:cZ-t}
\cZ_t:= \Big\{ z\in \bR^n: 1>z_1 > z_2 > \cdots > z_t>0, \,z_{t+1}=z_{t+2}=\cdots=z_n=0\Big\},\; t=1, \ldots, n.
\end{align}
We put also $\cZ_0=\{Z_0\}$. 
For $z\in \cZ_t$ and $u\in [0,1]$ define the insertion map 
$T^{(t)}: \cZ_t\times [0,1] \to \cZ_{t+1}$, $t=1, \ldots, n-1$ by the following formula 
\begin{equation}\label{eq:T-t}
 T^{(t)}(z,u)=\left\{\begin{array}{cc}
                T^{(t)}_1 (z,u), & z_1 <u\leq z_{0},\\
                T^{(t)}_2 (z,u), & z_2 <u\leq z_{1},\\
                \vdots & \vdots \\
                T^{(t)}_{t+1}(z,u), & u < z_t,                
               \end{array}
               \right.
\end{equation}
where $z_0=1$, and  
\begin{align*}
 &T_j^{(t)}(z,u):= (z_1, \ldots, z_{j-1}, u, z_{j}, \ldots, z_t, 0_{n-t-1}),
 \;\;\;\;j=1, \ldots, t+1,
 \\
 &T^{(0)}(0_n, 
 u)
=(u, 0_n). 
 \end{align*}
 In words, $T^{(t)}(z,u)$ returns a vector of length $n$ whose first 
$t+1$ components contain ordered values of vector $(z,u)$, 
while the remaining 
$n-t+1$ components are zeros.
\subsection{Optimal stopping rule and optimal value}
The next statement characterizes the optimal stopping rule and the 
corresponding optimal 
value in  Robbins' problem.
\begin{theorem}\label{th:Robbins}
 Let $C_m(z)$  be functions on $\cZ_{n-m}$, $m=0, \ldots, n$, defined recursively
by
\begin{align}
C_0(z) &= 0,
\label{eq:initialization-C}
\\
 C_m(z) &=\sum_{j=1}^{n-m+1}
\int_{z_j}^{z_{j-1}}\max\Big\{g_{n-m+1}(j, u), 
C_{m-1}\big(T_j^{(n-m)}(z, u)\big)\Big\} \rd u,
\label{eq:Cm}
\end{align}
where $g_m(j, u)$ and $T_j^{(n-m)}(\cdot, \cdot)$ 
are given in \eqref{eq:g-Rob} and \eqref{eq:T-t} respectively. Then  
 the optimal stopping rule is 
\begin{equation}\label{eq:Rob-stop}
 \tau_*:=\inf \Big\{1\leq t\leq n: t-R_t+(n-t)X_t \geq C_{n-t}(Z_t)\Big\},
\end{equation}
and the corresponding optimal value of Robbins' problem is
\[
 W_n^*(q_{\rm R})= n-C_n(0).
\]
\end{theorem}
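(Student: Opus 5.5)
The plan is to follow the same Markov-representation-plus-backward-induction scheme used in the proof of Theorem~\ref{th:GZ}, with two changes: the state now records the entire ordered sample, and the chain is time-inhomogeneous in its state space. Set $S_t:=(Z_t,X_t)$ with $Z_t=T^{(t-1)}(Z_{t-1},X_t)\in\cZ_t$. Since $X_{t+1}$ is independent of $\sF_t$, for bounded measurable $f$ we have
\[
\bE\{f(S_{t+1})|\sF_t\}=\int_0^1 f\big(T^{(t)}(Z_t,u),u\big)\rd u,
\]
and the right-hand side depends on the past only through $Z_t$. Thus $(S_t)$ is a Markov chain; its kernel depends on $t$, which is harmless because $t$ can be read off from $z$ as the number of nonzero coordinates. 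Moreover $Z_t$ is a sufficient statistic for $\sF_t$ up to the ordering of the data, and $R_t$ is the position of $X_t$ in $Z_t$. Hence $Y_t=g_t(R_t,X_t)=G_t(S_t)$ with $G_t(z,x):=g_t(k,x)$ when $x=z_k$. This is the adapted reward from Section~\ref{sec:reward} specialized via \eqref{eq:g-Rob}.

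Next, I apply the general optimal stopping theorem for finite-horizon Markov sequences \cite{ChowRobSieg}. Let $V_m(z,x)$ be the optimal expected reward at time $t=n-m$ in state $(z,x)$, with $z\in\cZ_{n-m}$. Then
\[
V_0=G_n,\qquad V_m(z,x)=\max\{G_{n-m}(z,x),\,C_m(z)\},
\]
where
\[
C_m(z):=\bE[V_{m-1}(S_{t+1})|Z_t=z]=\int_0^1 V_{m-1}\big(T^{(n-m)}(z,u),u\big)\rd u .
\]
The continuation value depends only on $z$ because the kernel ignores $x$. The optimal rule stops at the first $t$ with $G_t(S_t)\ge C_{n-t}(Z_t)$, which is \eqref{eq:Rob-stop}. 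The optimal value is $V_n^*(q_{\rm R})=\bE V_{n-1}(S_1)=\int_0^1V_{n-1}(T^{(0)}(0_n,u),u)\rd u=C_n(0)$, so $W_n^*=n-C_n(0)$.

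To obtain \eqref{eq:Cm}, split $[0,1]$ into the intervals $(z_j,z_{j-1}]$, $j=1,\dots,n-m+1$, using $z_0=1$ and $z_{n-m+1}=0$. On the $j$-th interval the new observation $u$ has relative rank $j$ among the $n-m+1$ observations. So $T^{(n-m)}(z,u)=T^{(n-m)}_j(z,u)$ and $G_{n-m+1}(T_j^{(n-m)}(z,u),u)=g_{n-m+1}(j,u)$, which gives
\[
V_{m-1}\big(T_j^{(n-m)}(z,u),u\big)=\max\big\{g_{n-m+1}(j,u),\,C_{m-1}\big(T_j^{(n-m)}(z,u)\big)\big\}.
\]
Summing the integrals over $j$ yields \eqref{eq:Cm}, with $C_0=0$ encoding that at $t=n$ stopping is forced. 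Unlike the Gusein--Zade case, there is no region $u<z_r$ where the reward vanishes: every rank carries a nonzero reward, so the full sample must be retained and no $z_rC_{m-1}(z)$ term appears.

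The main obstacle is bookkeeping rather than analysis: checking that the indices match, namely that the function $C_{m-1}$ is evaluated on $\cZ_{n-m+1}$, that $g_{n-m+1}$ uses the time index of the new observation, and that ties and boundary points have probability zero so the half-open intervals are immaterial. Measurability of the $V_m$ should also be noted; it holds because $g$ is continuous in $u$ and the insertion maps are piecewise continuous, so each $C_m$ is Borel and the integrals are well defined.
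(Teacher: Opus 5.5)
Your proposal is correct and follows the paper's proof essentially step for step. Both use the Markov chain $S_t=(Z_t,X_t)$ on the full ordered sample with $\sF_t$-conditional kernel $\int_0^1 f(T^{(t)}(Z_t,u),u)\,\rd u$, the representation $Y_t=G_t(S_t)$, backward induction with a continuation value $C_m(z)$ that depends only on $z$, and a split of $[0,1]$ by insertion rank $j$ to obtain \eqref{eq:Cm}. Your extra points are refinements of the same argument: reading $t$ off $z$ (where the paper uses the disjoint union $\cup_t\cS_t$), the explicit identity $\bE V_{n-1}(S_1)=C_n(0)$, and the null sets of ties.
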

\begin{remark}
 Because $g_n(k, x)=n-k$ and $C_0(z)=0$, 
 the inequality in \eqref{eq:Rob-stop} is always fulfilled for $t=n$. 
Thus, the stopping set in the definition of $\tau_*$ is
non--empty.
 \end{remark}
\begin{remark}
In the extended form
 the recursion is written as follows
 \begin{align}
  &C_m(z_1, \ldots, z_{n-m}, 0_m)
  \nonumber
  \\
  &= \sum_{j=1}^{n-m+1}
  \int_{z_j}^{z_{j-1}} \max\Big\{n-j-(m-1)(1-u),\,
C_{m-1}\big(z_1,\ldots,z_{j-1}, u, z_j, z_{j+1},\ldots, z_{n-m}, 0_{m-1}\big)\Big\} \rd u.
\label{eq:C-Robbins}
\end{align}
\end{remark}

\begin{proof}
The proof goes along the same lines as the proof of Theorem~\ref{th:GZ}.
\par 
{\bf 1$^0$.} With the introduced  notation we have that  
$Z_{t+1}= T^{(t)}(Z_t, X_{t+1})$.
Consider the process 
\[
 S_t := (Z_t, X_t)= \big(T^{(t-1)}(Z_{t-1}, X_t), X_t\big),\;\;\;t=1,2, \ldots.
\]
This process $(S_t)_{t\geq 0}$ is a Markov chain on state  space 
$(\cS, \sB)$ with $\cS:=\cup_{t=1}^n \cS_t$ and 
$\cS_t:=\cZ_t\times [0,1]$:
 \[
  \bP\{ S_{t+1}\in B |S_t, S_{t-1}, \ldots, S_1\}=
  \bP\{ S_{t+1}\in B |S_t\}=
  \int_0^1 I\Big\{\big(T^{(t)}(Z_t, u), u\big)
  \in B\Big\}\rd u.
 \]
Indeed, 
 for any bounded measurable function $f$ we have 
 \[
  f(S_{t+1})= f\big(T^{(t)}(Z_t, X_{t+1}), X_{t+1}),
 \]
and since $X_{t+1}$ is independent of $\sF_t$,
\begin{align}\label{eq:transition-R}
 \bE \big\{f(S_{t+1})|\sF_t\}= \bE
 \big\{f\big(T^{(t)}(Z_t, X_{t+1}), X_{t+1})|\sF_t\}=
 \int_0^1 f\big(T^{(t)}(Z_t, u), u\big) \rd u.
\end{align}
Because the right hand side depends on $Z_t$ (and $S_t$) only, 
and $S_1, \ldots, S_t$ are $\sF_t$--measurable,
$(S_t)_{t\geq 1}$ is a Markov chain. We note that  $(z,x)\in \cS_t$ is a reachable state of  the Markov chain
$(S_t)_{t\geq 1}$ at time $t$ only if $x\in \{z_1,\ldots, z_t\}$. Thus,  
there are only $t$ states that can be reached by the Markov chain at time~$t$. 
\par
The adapted reward $Y_t$ is represented as a function  of the state 
of the Markov chain $(S_t)_{t\geq 0}$. 
For $(z,x)=(z_1, \ldots, z_{t},0_{n-t}, x)\in
\cZ_{t}\times [0,1]$ and 
$t=1, \ldots, n$ we define
\begin{align*}
 G_t(z, x) & := \left\{ \begin{array}{cl} 
 g_t(k, x), &\;\;\; x = z_k,\;k=1, \ldots,t,
 \\
 0, &\;\;\; \hbox{otherwise}.
 \end{array}
 \right.
 \end{align*}
 however, the 
above definition
of $G_t(z,x)$ can be extended for every $(z,x)\in \cZ_t\times [0,1]$.
 With this notation we have
\[
 Y_t= G_{t}(S_t),\;\;S_t := (Z_t, X_t)= \big(T^{(t-1)}(Z_{t-1}, X_t), X_t\big),\;\;\;t=1,2, \ldots.
\]
\par 
{\bf 2$^0$.}
Let $V_m(z, x)$ be the optimal expected reward from  state
$(z,x)\in \cS_{n-m}$ when we examine observation $X_t$ for stopping, and 
there are $m=n-t$ steps to go.
In view of the Markov property of $(S_t)$, 
the dynamic programming yields 
\begin{align*}
 &V_m(z,x)= 
 \max\big\{G_{n-m}(z,x),\; \bE \big[V_{m-1}(S_{t+1})|S_t=(z,x)\big]\big\},\;\;\;m=1, \ldots, n-1,
\\
&V_0(z, x) = G_n(z,x).
\end{align*}
Because the transition kernel in \eqref{eq:transition-R} does not depend on $x$, the conditional expectation in the above formula~is 
a function of $z$ 
only,   
\[
 C_m(z):=\bE \big[V_{m-1}(S_{t+1})|S_t=(z,x)\big] = \bE \big[V_{m-1}(S_{t+1})|Z_t=z\big],
\]
and the dynamic programming recursion is 
\begin{equation*}
 V_m(z,x)= 
 \max\big\{G_{n-m}(z,x),\; C_m(z)\big\},\;\;\;\forall m.
\end{equation*}
Furthermore, by definition of $T^{(t)}(z,u)$ in \eqref{eq:T-t}, 
\begin{align*}
C_m(z)  =
 \bE \big[V_{m-1}(S_{t+1})|Z_t=z\big]=
 \int_0^1 V_{m-1}\big(T^{(t)}(z, u), u\big) \rd u
 =
 \int_0^1 V_{m-1}\big(T^{(n-m)}(z, u), u\big) \rd u
\\
\;=\sum_{j=1}^{n-m+1}
\int_{z_j}^{z_{j-1}}\max\Big\{g_{n-m+1}(j, u), C_{m-1}
\big(T_j^{(n-m)}(z, u)\big)\Big\} \rd u,
\end{align*}
where the upper summation  limit $n-m+1$ is the number of insertion
ranks.
At the last observation when $m=0$ 
we put $C_0(z)=0$.
\par 
The optimal stopping rule is defined as follows. At time instance $t$
we have 
\begin{eqnarray*} 
Z_t &=& (X_{1:t},\ldots, X_{t:t}, 0_{n-t}),
             \;\;\;\;\;t=1,\ldots,n,
\end{eqnarray*}
and if $R_t=k$ then 
the reward from stopping at time $t$ is $g_{t}(k,X_t)$, while the reward 
from continuing is $C_{n-t}(Z_t)$. Therefore the optimal stopping rule
is 
\[
 \tau_*:=\inf \Big\{1\leq t\leq n: g_{t}(R_t,X_t)\geq C_{n-t}(Z_t)\Big\}.
\]
The optimal value of Robbins' problem is $W_n^*(q_{\rm R})= n- C_n(0)$.
The proof is completed.
\end{proof}
\subsection{Discussion}\label{sec:Discussion-Robbins}
It is instructive to compute functions $C_m(z)$ for $m=1$ and $m=2$.
Since $z_0=1$, it follows from \eqref{eq:C-Robbins} by straightforward algebra that
\begin{align}\label{eq:C-1}
 C_{1}(z_1, \ldots, z_{n-1}, 0)&= \sum_{j=1}^n \int_{z_j}^{z_{j-1}} \max\{n-j,0\}\rd u
 =\sum_{j=1}^{n-1} \int_{z_j}^{z_{j-1}}(n-j)\rd u= n- 1- \sum_{j=1}^{n-1} z_j.
\end{align}
Now we compute $C_2(z_1, \ldots, z_{n-2}, 0, 0)$. Denote for brevity
$Q:=\sum_{j=1}^{n-2} z_j$, and recall from \eqref{eq:g-Rob} that for $m=2$ we have
$g_{n-m+1}(j,u)=g_{n-1}(j, u)=n-1-j+u$.
In addition,
\[
 C_1(T_{j}^{(n-2)}(z,u))= n-1-Q-u,\;\;\;j=1, \ldots, n-2.
\]
Therefore, in view of
\eqref{eq:C-Robbins},
\begin{align*}
 C_2(z_1, \ldots, z_{n-2}, 0, 0) &=
 \sum_{j=1}^{n-1} \int_{z_j}^{z_{j-1}} \max\Big\{ n-1-j+u, n-1- Q -u\Big\}
 \rd u.
\end{align*}
Note that $\max\{a, b\}= b+ (a-b)_+$, where $(x)_+=\max\{x, 0\}$. Therefore
\[
 \max\Big\{ n-1-j+u, n-1-Q -u\Big\} = n-1-Q-u + (Q-j+2u)_+,
\]
and
\begin{align}
 &C_2(z_1,\ldots, z_{n-2}, 0, 0)
 \nonumber
 \\
 &\;\;=(n-1-Q)\sum_{j=1}^{n-1} (z_{j-1}-z_j)  - \sum_{j=1}^{n-1}\int_{z_j}^{z_{j-1}} u\rd u + \sum_{j=1}^{n-1}\int_{z_j}^{z_{j-1}} (Q-j+2u)_+\rd u
\nonumber
 \\
&\;\hspace{47mm}= n-\frac{3}{2}- Q  + \sum_{j=1}^{n-1} \int_{z_j}^{z_{j-1}} (Q-j+2u)_+\rd u
\nonumber
\\
&\;\hspace{47mm}= n-\frac{3}{2}- Q + \frac{1}{4}\sum_{j=1}^{n-1} \big[(Q-j+2z_{j-1})_+^2-(Q-j+2z_{j})_+^2\big],
\label{eq:C-2}
\end{align}
 where in the third line we have taken into account that $z_{0}=1$ and $z_{n-1}=0$. In particular, specializing this formula for  $n=2$ we obtain
 \[
  C_2(0,0)= 2- \frac{3}{2} +\frac{1}{4} \big[(0-1+2\cdot 1)_+^2 - (0-1+ 2\cdot 0)_+^2\big]=\frac{3}{4}.
 \]
Thus, the optimal value of Robbin's problem for $n=2$ is
$W_2^*(q_{\rm R}) = 2- C_2(0,0)=\frac{5}{4}$, which is the well known result.

\subsection{Numerical experiments}\label{sec:Numerics-Robbins}

\paragraph{Algorithm.}
 The computer implementation of recursion~\eqref{eq:C-Robbins}
presents serious computational challenges.
At each step of the dynamic programming procedure the one--dimensional
integrals
in~\eqref{eq:C-Robbins}
should be approximated on the grid with $K$ intervals.
Let $x_i=i/K$, $i=0, \ldots, K$ be the grid nodes. According to \eqref{eq:cZ-t},
at every time
$t\in \{1, \ldots, n\}$ {\em the grid state} $x(v)$
is the $t$--tuple of non-increasing grid nodes appended by
$n-t$ zeros, $x(v)=(x_{v_1}, \ldots, x_{v_t}, 0_{n-t})$,
where
\[
 v\in \cV_t:=\Big\{(v_1, \ldots, v_t)\in \bN^t: K+1\geq v_1\geq v_2\geq\cdots\geq v_t\geq 1\Big\}.
\]
The total number of grid states at time $t$ is
\[
 |\cV_t|= \binom{K+t}{t}, \;\;\;|\cV_0|=1,
\]
which is the number of multisets of size $t$ in $K+1$ grid indicies.
\par
Let $\hat{C}_m(x(v))$ be an approximation for $C_m(x(v))$, where
$v\in \cV_{n-m}$, and $m=n-t$.
The analytic expressions for $C_1(z)$ and $C_2(z)$ are given in \eqref{eq:C-1}
and \eqref{eq:C-2}. For $m\geq 3$ the approximations $\hat{C}_m(x(v))$ are
computed as follows.
Fix any $v\in \cV_t$, and let $x(v)$ be the grid
state associated with~$v$. The integrals in
\eqref{eq:Cm}--\eqref{eq:C-Robbins} take the form
\begin{align}
 &\int_{x_{v_j}}^{x_{v_{j-1}}}\max\big\{ g_{n-m+1}(j, u), C_{m-1}\big(T_j^{(t)}(x(v),u)\big) \big\}\rd u
\nonumber
 \\
&\;\;\;\;\;= \sum_{i=v_j}^{v_{j-1}-1} \int_{x_i}^{x_{i+1}} \max\big\{ g_{n-m+1}(j, u), C_{m-1}\big(T_j^{(t)}(x(v),u)\big) \big\}\rd u.
\label{eq:the-integral}
\end{align}
Recall that
 \[ T_j^{(t)}(x(v), u)= (x_{v_1},\ldots, x_{v_{j-1}}, u, x_{v_j}, \ldots, x_{v_t}, 0_{n-t-1}).
 \]
For $i\in \{0, \ldots, K\}$ and $v\in \cV_t$ define
$w_i(v):={\rm sort}_{\downarrow}(v_1, \ldots, v_t, i)\in \cV_{t+1}$,
where ${\rm sort}_{\downarrow}$ is the operator of sorting the indices in descending order. With this notation,
\[
 T_j^{(t)}(x(v), x_i)= x(w_i(v)),\;\;\;i= 0,\ldots, K.
\]
The value $C_{m-1}(T_j^{(t)}(x(v), u))$
on  interval $[x_i, x_{i+1}]$
is approximated by interpolating
the previously computed values at points $x(w_i(v))$ and $x(w_{i+1}(v))$, $\hat{C}_{m-1}(x(w_i(v)))$ and $\hat{C}_{m-1}(x(w_{i+1}(v)))$, i.e.,
\[
 \hat{C}_{m-1}\big(T_j^{(t)}(x(v), u)\big) = \frac{x_{i+1}-u}{x_{i+1}-x_i}\hat{C}_{m-1}(x(w_i(v)))+ \frac{u-x_{i}}{x_{i+1}-x_i}\hat{C}_{m-1}\big(x(w_{i+1}(v))\big).
\]
Then the integral in \eqref{eq:the-integral} is computed analytically. This is possible because $g_{n-m+1}(j, u)$ is an affine function in $u$; so
the integrand is a maximum of two affine functions. Thus, the errors in approximation of functions $C_m(z)$ are caused by interpolation at each step of the dynamic programming procedure.
\par
For each state $x(v)$ the algorithm reconstructs its coordinates in a long vector representation,
and for each interval of integration it finds the rank of the inserted observation. These operations become more expensive as $t$ grows. A conservative
upper bound on the number of operations is
\[
 O\bigg(\sum_{t=0}^{n-2} \binom{K+t}{t} \Big[t\log(K+1)+ K(t+1)\log(t+2)\Big]\bigg).
\]
The required memory also grows very fast with $K$ and $n$.
All this implies that the recursion \eqref{eq:C-Robbins} based on the whole memory
can be implemented with reasonable accuracy
only for small values of horizon $n$.

\paragraph{Numerical results.}

\begin{table}
\begin{center}
 \begin{tabular}{||c|c|c||c|c|c||} \hline
$n$ & $K$ & $\hat{W}_n^*$ & $n$ & $K$ & $\hat{W}_n^*$
\\ \hline\hline
1 & - & 1.0000 &  6& 250 & 1.6321
\\
2 & - &1.2500 & 7&200 & 1.6823
\\
3 & 10000 & 1.3916 & 8 & 100 & 1.7244
\\
4 & 2000 & 1.4933 & 9& 50 & 1.7606
\\
5 & 600 &1.5707 &  10 & 50 & 1.7924\\
\hline\hline
 \end{tabular}
 \caption{Approximate optimal values of $\hat{W}_n^*$ computed for
 $n=1, \ldots, 10$ with number of grid intervals $K$ given in the table. The numbers are rounded to 4 decimal digits. \label{tab:Robbins}}
 \end{center}
\end{table}
Approximations  $\hat{W}_n^*$ to the optimal values
$W_n^*(q_{\rm R})=n-C_n(0,\ldots, 0)$
for $n=1, \ldots, 10$ are reported in Table~\ref{tab:Robbins}.
The optimal values for $n=1, 2$ are included for completeness, while for $n\geq 3$
they are computed using $K$ grid intervals with the values of $K$ 
indicated in the table.
\par
The interpolation error at every step of the dynamic programming procedure is the main source of inaccuracy in computing the optimal values.
These errors accumulate with progression of the computational procedure. 
A conservative bound on the
accuracy of $\hat{W}_n^*$ can be estimated as follows.
In view of
\eqref{eq:C-Robbins}, at step $m$
the integrand satisfies Lipschitz's condition with constant $m-1$; so 
the
error of interpolation
at step $m$ of the procedure is of the order $O(m/K)$. Then 
the final approximation
error in computing $W_n^*$ is of the order $O(n^2/K)$.
Thus, theoretically, the number of grid intervals 
should grow with the horizon in order to maintain 
reasonable accuracy in evaluating the optimal values.
Unfortunately, this is impossible in view of the computer power limitations and memory constraints. 
The algorithm complexity grows very fast
with $K$ and $n$, and computation of results in Table~\ref{tab:Robbins} is time consuming. For example,
for $n=7$ and $K=200$ the computation took approximately 14340 seconds on
a standard desktop computer.
\par
The results in Table~\ref{tab:Robbins} can be compared with exact results
in  the literature obtained for $n=3, 4$.
\citeasnoun{Ass96} report the value $W_3^*\approx 1.3915$, while
\citeasnoun{Dendievel} show that
$W_4^*\approx 1.4932$.  Our approximate computation with $K=10000$
grid intervals yields
$\hat{W}_3^*\approx 1.391552086$, while for $n=4$ with $K=3000$ grid intervals
we have $\hat{W}_4^* \approx 1.493297613$.
These results are in line with the constants reported in the literature.
We are not aware of exact results for $n\geq 5$.
\par
Recently upper bounds on the optimal values $W_n^*$ for $5\leq n\leq 100$ were reported in \citeasnoun{Brice2026}. In this paper the Robbins' problem is modeled
as an infinite Markov decision process, and finite abstractions are constructed.
A comparison of the results in Table~3 of  \citeasnoun{Brice2026} with
Table~\ref{tab:Robbins} for $5\leq n\leq 10$ reveals that the numbers in \citeasnoun{Brice2026} are  bigger than those given in Table~\ref{tab:Robbins}. However, they coincide with our results in the first two decimal digits. It is worth noting that the results in Table~\ref{tab:Robbins}
do not provide upper bounds on the optimal values; they are obtained by approximation of the optimal recursion, and they are not associated
with a particular stopping strategy. Nevertheless, in principle,
based on the optimal  recursion \eqref{eq:C-Robbins},
both upper and lower bounds on the optimal values can be developed
by judicious  approximation of functions $C_{m}(z)$ at each step of the 
dynamic programming procedure.
\par
A natural way to reduce computational difficulties in implementation of the whole history dynamic program is to retain a fixed number, say, $r$ highest order statistics at every step of the procedure. This corresponds to restricting the
dimensionality of the state space of the corresponding Markov representation. Such a restriction can allow one to obtain approximate optimal values
for bigger values of~$n$. We tried to implement this idea in our numerical experiments; however, the numerical results were discouraging: for reasonable running times $r$ was chosen to be $3$ or $5$, and the obtained results were not in the range of optimal values given by
the memoryless threshold strategies. In general, further numerical work
is required in order to obtain more precise approximations
to the optimal values for bigger values of $n$.

\section{Concluding remarks}\label{sec:conclusion}
We close this paper with  concluding remarks.
\par\medskip
1.
The structure of optimal stopping rules established in Theorems~\ref{th:GZ} and~\ref{th:Robbins}
reveals some interesting features.  
Construction of the Markov chains $(S_t)_{t\geq 1}$ in  theorems' proofs 
depends very weakly on the reward function $q$: the support of $q$ determines the dimensionality of the state space. For example, in Gusein-Zade's problem $q$ is supported on $\{1, \ldots, r\}$, so that the state space of the embedded 
Markov chain has fixed dimensionality  $r+1$, which  does not increase with $n$.  In contrast, in Robbins' $n$--period stopping problem the dimensionality of the 
state space of the chain is taken to be $n+1$ to accommodate for the memory growth. 
The reward function $q$ appears in the Markov representation of random variables 
$(Y_t)$. It determines the reward from visiting a state by the chain $(S_t)_{t\geq 1}$. 

\par\medskip  
2. The structure of the optimal stopping rules described in 
 item~1 above allows one to
develop  optimal stopping rules for
a broad range of other stopping problems with full information and general rank--dependent reward function $q$.
For example, consider the problem of selecting the $r$th best observation
with full information, i.e., we want to find a stopping rule
$\tau_*\in\sT$ such that 
$\bP\{A_{\tau_*}=r\}=\max_{\tau\in \sT}\bP\{A_\tau=r\}$.
The no--information variant of this problem was studied in the literature.
The specific case $r=2$ is the so--called postdoc problem that was
stated and solved by \citeasnoun{Vanderbei2012}; see  \citeasnoun{GMZ} for further references. We are not aware of results on the full information variant of this problem. The construction used in the proof of Theorem~\ref{th:GZ} enables us to fully characterize the optimal solution. In particular,
here $q(a)=I(a=r)$, and, in view of  \eqref{eq:g-t},
\begin{equation}\label{eq:postdoc}
 g_t(k, x)= I(k\leq r) \binom{n-t}{r-k} (1-x)^{r-k} x^{n-t-r+k},\;\;\;k=1, \ldots, n.
\end{equation}
The Markov chain $(S_t)_{t\geq 1}$ underlying the 
representation of adapted rewards $Y_t$
remains the same. The only change is in
functions $g_t$; now they are  given by \eqref{eq:postdoc}.
Thus, the optimal stopping rule is again
given by Theorem~\ref{th:GZ} with the above new definition of
$g_t(R_t, X_t)$.
The same reasoning is applicable  for arbitrary reward functions $q$, and for
variants of the full information stopping problems with random horizon.
\par\medskip 
3.  The form of the optimal strategy for Robbins' problem derived
in Theorem~\ref{th:Robbins} opens up new avenues for
investigating the limit of optimal values $W_n^*$ and
for developing $\epsilon$--optimal strategies based on a fixed number of observations.
In particular, it seems plausible that 
the existing bounds on the optimal value, which  are based on the 
memoryless threshold rules, 
can be improved by considering strategies of the type
\[
 \tau=\inf \big\{ 1\leq t\leq n: t- R_t + (n-t)X_t \geq c_{n-t}\big\},
\]
where $(c_t)$ are constants to be specified.

\subsubsection*{Acknowlegment}
We were assisted by the OpenAI Codex coding agent when
implementing the algorithm and writing the MATLAB code for the
approximate computation of the functions $C_m(z)$, which are given by
recursive formulas of Theorems~\ref{th:GZ} and \ref{th:Robbins}. The numerical results
reported in Tables~\ref{tab:GZ} and \ref{tab:Robbins} are based on
this code; it is available from the authors upon request.  All of the theoretical results,
proofs and text are our own.
The research is supported by the BSF grant 2020063.

\bibliographystyle{agsm}

\end{document}